\title{Diameter Bounds for Friends-and-Strangers Graphs}
\author[Amogh Akella]{Amogh Akella}
\address[]{Westwood High School, Austin, TX 78750, USA}
\email{hgomamogh@gmail.com}
\author[Rupert Li]{Rupert Li}
\address[]{Stanford University, Stanford, CA 94305, USA}
\email{rupertli@stanford.edu}
\date{\today}
\begin{document}

\begin{abstract}
Consider two $n$-vertex graphs $X$ and $Y$, where we interpret $X$ as a social network with edges representing friendships and $Y$ as a movement graph with edges representing adjacent positions.
The friends-and-strangers graph $\mathsf{FS}(X,Y)$ is a graph on the $n!$ permutations $V(X)\to V(Y)$, where two configurations are adjacent if and only if one can be obtained from the other by swapping two friends located on adjacent positions.
Friends-and-strangers graphs were first introduced by Defant and Kravitz, and generalize sliding puzzles as well as token swapping problems. Previous work has largely focused on their connectivity properties.

In this paper, we study the diameter of the connected components of $\FS(X, Y)$. We extend the result of Kornhauser, Miller, and Spirakis on sliding puzzles to general graphs in two ways. First, we show that the diameter of $\FS(X, Y)$ is polynomially bounded when both the friendship and the movement graphs have large minimum degree. Second, when both the underlying graphs $X$ and $Y$ are Erd\H os-R\' enyi random graphs, we show that the distance between any pair of configurations is almost always polynomially bounded under certain conditions on the edge probabilities.
\end{abstract}

\maketitle

\section{Introduction}

Let $X$ be a ``friendship graph'' on $n$ vertices, where the vertices represent people and the edges represent friendships. Let $Y$ be a ``movement graph'' on $n$ locations. Suppose that the $n$ people in $X$ are placed on the vertex set of $Y$, each in a distinct location. Two people can swap positions in a ``friendly swap'' if they are friends in $X$ and are in adjacent positions in $Y$.
The following two problems naturally come to mind.
\begin{question}\label{question:oneone} Given two configurations of people on the movement graph, can one be obtained from the other via a sequence of friendly swaps?
\end{question}
\begin{question}\label{question:onetwo}
What is the maximum number of moves needed to go from one configuration to another?
\end{question}

More formally, let $G$ be a graph on $n!$ vertices. 
We say that $G$ is the \emph{friends-and-strangers graph} of $X$ and $Y$, denoted as $\FS(X, Y)$, if and only if it satisfies the following conditions: 

\begin{enumerate}
    \item there exists a bijection $f$ from the vertex set of $G$ to the set of permutations mapping the people in $X$ onto $Y$;
    \item let $v$ and $w$ be any two vertices in $G$. Then, $v$ and $w$ are adjacent in $G$ if and only if $f(v)$ is reachable from $f(w)$ in exactly one friendly swap. 
\end{enumerate}

Friends-and-strangers graphs are an extension of the famous 15-puzzle, which involves sliding tiles labeled 1 through 15 along with an empty slot in a 4-by-4 grid until a specific configuration is reached. 
Indeed, it can be easily verified that the graph of configurations of the 15-puzzle, with edges corresponding to possible moves, is isomorphic to the graph $\FS(\Star_{16},\Grid_4)$.\footnote{$\Star_n$ is the star graph with $n$ vertices and $\Grid_n$ is an $n\times n$ grid graph. See \Cref{sec:background} for formal definitions.} 

More generally, sliding puzzles can be defined on arbitrary $n$-vertex graphs $Y$ by placing $n-1$ tiles on distinct vertices, with one vertex left unoccupied. A single move consists of moving a tile from its current location to an adjacent unoccupied location. The movement graph over configurations of the tiles is then given by $\FS(\Star_n, Y)$ where the tiles correspond to the leaves of the star graph and the center of the star corresponds to the ``hole'' representing the unoccupied position. Solving the puzzle corresponds to reaching a target configuration through a sequence of legal moves. Connectivity in $\FS(\Star_n, Y)$, i.e., \cref{question:oneone}, is akin to asking whether the puzzle is solvable from a given starting configuration. \cref{question:onetwo}, on the other hand, asks how quickly the puzzle can be solved. In other words, given that the target configuration is reachable, how many steps are needed to reach it?

Connectivity in $\FS(\Star_n, Y)$ has been extensively studied starting with the work of Wilson~\cite{wilson1974graph} and is well understood. Additionally, general bounds for the diameter of connected components in this graph were proven by Kornhauser, Miller, and Spirakis \cite{715921}. The main contribution of this work is to use the latter result to prove diameter bounds for other families of friends-and-strangers graphs.

Before we elaborate on our results, we describe previous work on friends-and-strangers graphs and related problems.

\subsubsection*{Connectivity in friends-and-strangers graphs.} Friends-and-strangers graphs were first introduced in their full generality and studied by Defant and Kravitz~\cite{defant2021friends}. They studied connectivity for special cases such as  $\FS(\Path_n, Y)$ and $\FS(\Cycle_n, Y)$, and derived necessary and sufficient conditions for $\FS(X,Y)$ to be connected. Subsequently, connectivity has been extensively studied for other families of friends-and-strangers graphs \cite{alon2021extremal,bangachev2022asymmetric,defant2022connectedness,defant2021friends,milojevic2022connectivity,Stanley2012equivalence,wang2023connectivity}. 
Stronger notions of connectivity, namely biconnectivity and $k$-connectivity, have also been studied~\cite{krishnan2024connectivity}. 

Alon, Defant and Kravitz \cite{alon2021extremal} initiated the study of extremal properties of friends-and-strangers graphs, including connectivity as a function of the minimum degrees of the graphs $X$ and $Y$ as well as when both $X$ and $Y$ are Erd\H os-R\'enyi random graphs. 
Bangachev \cite{bangachev2022asymmetric} followed up with an almost complete characterization of connectivity as a function of the minimum degrees of the graphs $X$ and $Y$. Wang and Chen \cite{wang2023connectivity} and Milojevi\'c \cite{milojevic2022connectivity} further refined the connectivity results for random graphs.

\subsubsection*{The diameter of friends-and-strangers graphs.} The focus of our work is on \cref{question:onetwo}, namely the diameter of friends-and-strangers graphs. Defant and Kravitz first proposed this as an open problem in their work \protect{\cite[Section 7.3]{defant2021friends}}. They noted that although in many settings $\FS(X,Y)$ is not connected, one may ask whether the diameter of each connected component in the graph is $\poly(n)$. (Observe that the size of $\FS(X,Y)$ is $n!$ -- superexponential in $n$.) In other words, starting from any configuration, is it possible to get to any {\em reachable} configuration in a small number of steps?

Jeong~\cite{jeong2022diameters} provided some partial answers to that question. He showed the existence of a family of friends-and-strangers graphs that contain connected components of diameter $e^{\Theta(n)}$. Jeong also investigated the diameter of some special classes of graphs; these results are reported in Table~\ref{table:our-results}. 

\subsubsection*{Token swapping.} Friends-and-strangers graphs are also a generalization of the {\em token swapping} problem that has been studied extensively in theoretical
computer science. In this problem, $n$ distinct tokens are places on the vertices of an $n$-vertex graph $Y$. A move consists of swapping any two tokens whose locations are adjacent in $Y$. The graph of configurations corresponding to token swapping is then precisely given by $\FS(K_n, Y)$, where $K_n$ is the complete graph on $n$ vertices.
Yamanaka, Demaine, Ito, Kawahara, Kiyomi, Okamoto, Saitoh, Suzuki, Uchizawa, and Uno \cite{Yamanaka2015SwappingLT} first introduced the token swapping problem, and studied the complexity of finding the shortest path between any pair of configurations. They noted that if $Y$ is connected, then so is $\FS(K_n, Y)$, and its diameter is always bounded by $O(n^2)$. They accordingly asked whether the shortest path between any pair of configurations can be found in time polynomial in $n$.
Miltzow, Narins, Okamoto, Rote, Thomas, and Uno \cite{miltzow2016approximation} showed that this problem is NP-hard, and subsequent work has consequently studied the approximability of the problem \cite{DBLP:journals/corr/BonnetMR16}.

\subsection{Our Results}
In this paper we provide new diameter bounds for a number of interesting families of friends-and-strangers graphs. 

Our main contribution is establishing diameter bounds for families of graphs $X,Y$ that have large minimum degree. This investigation is inspired by the work of Bangachev~\cite{bangachev2022asymmetric} who provided similar conditions for the connectivity of $\FS(X,Y)$.

\begin{restatable}{theorem}{mindegreemain}\label{theorem:mindegreemain}
Let $X$ and $Y$ be connected graphs on $n$ vertices such that 
\[ \min(\delta(X), \delta(Y)) + 2\max(\delta(X), \delta(Y)) \geq 2n.\]
Then $\FS(X, Y)$ is connected and has diameter at most $O(n^5)$. 
\end{restatable}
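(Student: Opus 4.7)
The plan is to reduce the diameter bound to \Cref{theorem:star-main} via a simulation argument costing $O(n^2)$ per simulated step. Since $\FS(X,Y)\cong\FS(Y,X)$, after relabeling we may assume $\delta(X)\le\delta(Y)$; the hypothesis then gives $3\delta(Y)\ge 2n$, so $\delta(Y)\ge 2n/3$. In particular, $Y$ has diameter at most $2$ and every pair of vertices of $Y$ shares at least $n/3$ common neighbors, giving ample room to shuttle ``helper'' tokens without obstruction.

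At the heart of the argument is a \emph{simulation lemma}: any single move of $\FS(\Star_n, Y)$ can be realized by at most $O(n^2)$ moves of $\FS(X, Y)$. To set this up, fix a distinguished person $p^{*}$ (chosen to have maximum degree in $X$) who plays the role of the star's center, while the remaining $n-1$ people play the leaves. A star move swaps $p^{*}$ with the person $q$ currently at a $Y$-adjacent vertex. Since $p^{*}$ and $q$ need not be friends in $X$, we simulate the swap by a commutator: find a helper $h$ that is friends with $p^{*}$ in $X$, use the density of $Y$ to route $h$ to a vertex adjacent to the positions of both $p^{*}$ and $q$, and then execute a short cyclic sequence of friendly swaps whose net effect is to exchange $p^{*}$ and $q$ while restoring $h$ to its original location. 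The routing cost is $O(n)$ because $Y$ has diameter $2$, and bookkeeping for temporarily displaced tokens contributes another factor of $O(n)$, totalling $O(n^2)$ per simulated star move.

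Once the simulation lemma is established, the theorem follows immediately. By \Cref{theorem:star-main}, any two configurations are joined by a path of length $O(n^4)$ in $\FS(\Star_n, Y)$; lifting each edge via the simulation lemma yields a path of length $O(n^2)\cdot O(n^4)=O(n^6)$ in $\FS(X, Y)$, and connectivity is subsumed as a corollary.

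The main obstacle will be verifying the simulation lemma when $\delta(X)$ is small. The hypothesis is compatible with $\delta(X)$ being as low as $2$ provided $\delta(Y)$ is essentially $n-1$; in that regime $p^{*}$ and $q$ may share no direct common friend in $X$, so the simulation must construct the desired swap indirectly through a longer chain of friends in $X$, leveraging the near-completeness of $Y$ to position the chain. Handling this asymmetric regime uniformly while keeping the per-step overhead at $O(n^2)$ is the delicate part of the proof.
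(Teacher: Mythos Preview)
Your overall architecture—invoke \Cref{theorem:star-main} and pay a simulation overhead—is the same as the paper's, but your execution diverges in a way that leaves a real gap. The paper does \emph{not} fix a single center $p^{*}$ and simulate $\FS(\Star_n,Y)$ move by move. Instead, for each pair $\inv u,\inv v$ that are adjacent in $X$, it restricts to the set $Q=\sigma(N_X[\inv u])$ of positions occupied by $\inv u$'s closed $X$-neighborhood. A result of Bangachev~\cite{bangachev2022asymmetric} gives $\delta(Y|_Q)>\tfrac12|Q|$, whence $\FS(\Star_{|Q|},Y|_Q)$ is connected; since $X$ restricted to $N_X[\inv u]$ contains a star centered at $\inv u$, this sits inside $\FS(X,Y)$, and \Cref{theorem:star-main} delivers the swap $(\inv u,\inv v)$ in $O(n^4)$ moves while fixing everyone outside $Q$. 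Chaining at most $\binom{n}{2}$ such exchanges via the $\FS(K_n,X)$ bound of \Cref{theorem:kn} gives $O(n^6)$. The point is that the star center \emph{varies} with the swap, and the restriction to a neighborhood guarantees that every participant is already a friend of the center—no routing or commutator tricks are needed.

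Your simulation lemma, as sketched, does not go through. The three-move commutator you describe needs the helper $h$ to be an $X$-friend of \emph{both} $p^{*}$ and $q$, not just of $p^{*}$; otherwise the swap of $h$ with $q$ is illegal. When $\delta(X)$ is small there is no reason for $p^{*}$ and $q$ to share an $X$-neighbor. More fundamentally, ``using the density of $Y$ to route $h$'' ignores that every intermediate swap along the route demands an $X$-edge between $h$ and whoever it displaces; density of $Y$ supplies adjacent positions, not friendly occupants. Your fallback of a ``longer chain of friends in $X$'' is exactly where the work lies, and nothing in the sketch bounds its cost by $O(n^2)$ rather than the $O(n^4)$ the paper actually pays per exchange. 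The paper's variable-center-plus-subgraph trick is what makes the argument clean: once restricted to $N_X[\inv u]$, everyone present is already a friend of the center, so the only remaining issue is connectivity of $\FS(\Star_{|Q|},Y|_Q)$, which Bangachev's density bound supplies for free.
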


\begin{restatable}{theorem}{mindegreesecond}\label{theorem:mindegreesecond}
Let $X$ and $Y$ be connected graphs on $n$ vertices such that $\delta(X) + \delta(Y) \geq \frac{3n}{2}$.
Then $\FS(X, Y)$ is connected and has diameter at most $3n(n-1)/2$. 
\end{restatable}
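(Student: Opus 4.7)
The plan is to reduce to the ``token swapping'' case $\FS(K_n,Y)$, whose diameter is at most $\binom{n}{2}$ by the bound established in \Cref{sec:specialcases}, and to show that, under the hypothesis $\delta(X)+\delta(Y)\geq 3n/2$, every step of a $\FS(K_n,Y)$-path can be simulated by at most three genuine $X$-friendly swaps. This yields the desired bound $3\binom{n}{2}=3n(n-1)/2$, with connectedness following for free from the fact that $\FS(K_n,Y)$ is connected whenever $Y$ is.

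Fix a configuration $\sigma$, an edge $\{u,v\}\in E(Y)$, and the tokens $a=\sigma(u)$, $b=\sigma(v)$ at its endpoints. If $\{a,b\}\in E(X)$ the swap is already $X$-legal, so suppose not. I would perform the classical three-move conjugation: locate a helper position $w\in N_Y(u)\cap N_Y(v)$ whose occupant $c=\sigma(w)$ lies in $N_X(a)\cap N_X(b)$, and then execute the $X$-friendly swaps $(a,c)$ on $\{u,w\}$, $(c,b)$ on $\{u,v\}$, and $(c,a)$ on $\{v,w\}$ in that order. A direct bookkeeping check confirms that the three swaps are all legal and that their net effect is to transpose $a$ and $b$ while returning $c$ to its original position $w$.

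The core combinatorial step is to show that such a pair $(w,c)$ always exists. First, the hypothesis $\delta(X)+\delta(Y)\geq 3n/2$ together with $\delta(X),\delta(Y)\leq n-1$ forces $\delta(X),\delta(Y)\geq n/2+1$. Setting $W=N_Y(u)\cap N_Y(v)\subseteq V(Y)\setminus\{u,v\}$ and $F=N_X(a)\cap N_X(b)\subseteq V(X)\setminus\{a,b\}$, we have $|W|\geq 2\delta(Y)-n\geq 2$ and $|F|\geq 2\delta(X)-n\geq 2$. The set $T=\{\sigma(w'):w'\in W\}$ of tokens at positions in $W$ has size $|W|$, and $T\subseteq V(X)\setminus\{a,b\}$ since $u,v\notin W$. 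Applying inclusion--exclusion inside the $(n-2)$-set $V(X)\setminus\{a,b\}$ gives
\[
|T\cap F|\;\geq\;|T|+|F|-(n-2)\;\geq\;2\bigl(\delta(X)+\delta(Y)\bigr)-3n+2\;\geq\;2,
\]
so a valid helper is always available.

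Iterating this simulation along a path of length at most $\binom{n}{2}$ in $\FS(K_n,Y)$ produces an $X$-friendly path of length at most $3\binom{n}{2}$ between any two configurations in the same connected component, completing the proof. The only delicate point I foresee is the pigeonhole bookkeeping: one must verify that the chosen helpers $c,w$ always avoid the forbidden tokens and positions $\{a,b\}$ and $\{u,v\}$, which the ``slack of $2$'' in the displayed inequality guarantees cleanly.
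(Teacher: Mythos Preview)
Your argument is correct and is essentially the same as the paper's: both find a helper vertex satisfying four adjacency conditions via the degree hypothesis, perform a three-move conjugation to realize a single transposition, and then invoke the $\binom{n}{2}$ diameter bound for the complete-graph case. The only difference is the side of the natural symmetry $\FS(X,Y)\cong\FS(Y,X)$ you work on: the paper simulates edges of $\FS(X,K_n)$ (starting from an $X$-edge $pq$ and finding a helper position in $Y$), whereas you simulate edges of $\FS(K_n,Y)$ (starting from a $Y$-edge $uv$ and finding a helper token in $X$); the counting and the three-swap mechanics are identical up to this relabeling.
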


We can visualize these results, along with the results of Bangachev \cite{bangachev2022asymmetric}, in \cref{fig:Banga}, where we identify pairs of values of $\delta(X)$ and $\delta(Y)$ for which $\FS(X,Y)$ is always connected, sometimes disconnected, or always has polynomial diameter. Observe that we prove a polynomial bound on the diameter for most of the regions where Bangachev~\cite{bangachev2022asymmetric} proved connectivity. The only exception is the green dotted region. This leads us to conjecture that whenever $\FS(X,Y)$ is connected, it has polynomially bounded diameter. Resolving this conjecture is the main problem left open by our work. See \cref{sec:openquestions} for a discussion of open questions. 

\begin{figure}[h]
    \centering
    \includegraphics[width=0.7\textwidth]{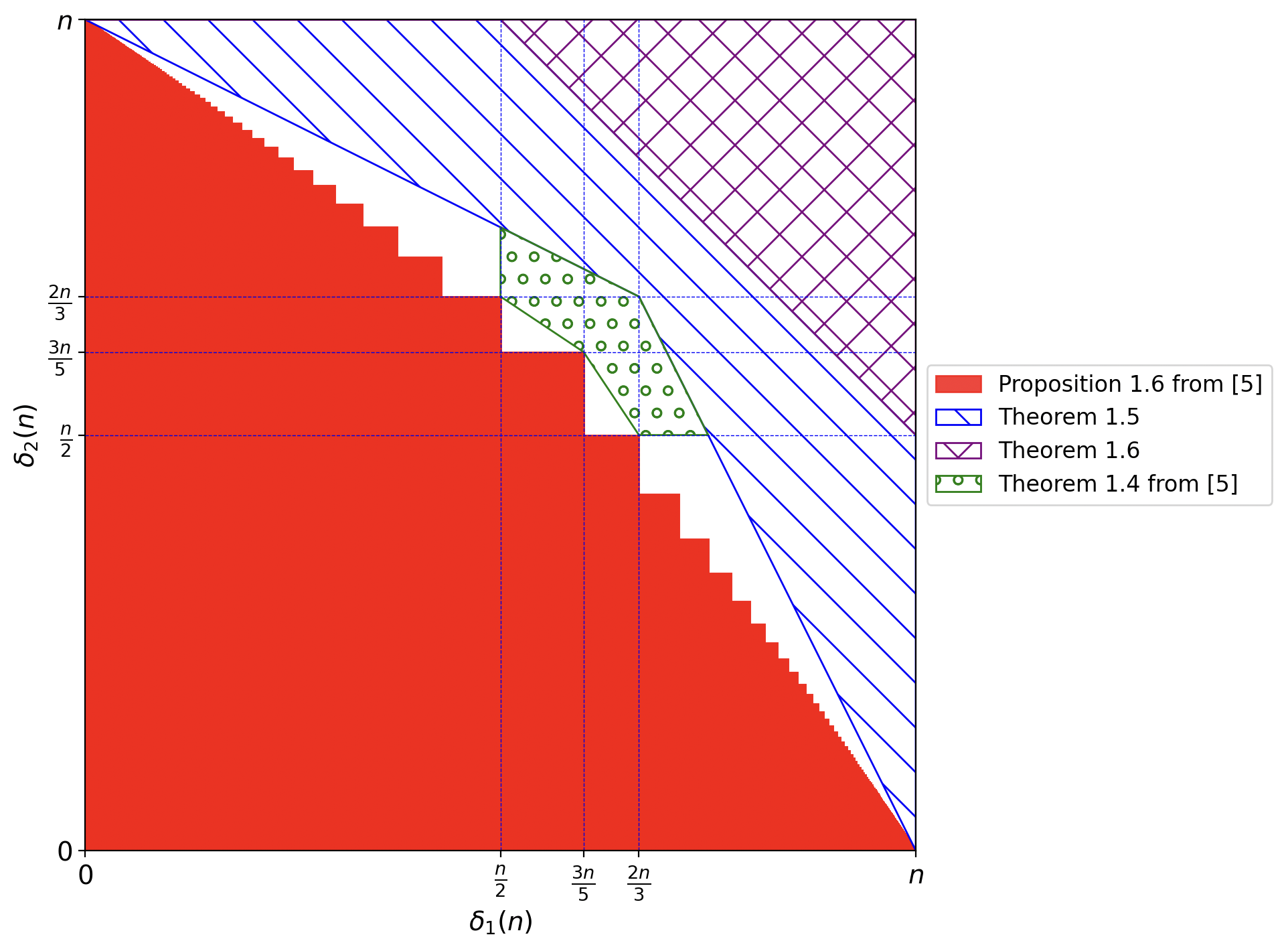}
    \caption{Points $(\delta_1(n), \delta_2(n))$ for which $\FS(X, Y)$ must be connected (green), must be connected with polynomial diameter (blue, purple), or can be disconnected (red). Additive constants are omitted. }
    \label{fig:Banga}
\end{figure}

Our next result studies the diameter of $\FS(X,Y)$ when both $X$ and $Y$ are Erd\H os-R\'enyi random graphs.
Our investigation is inspired by the work of Alon, Defant, and Kravitz~\cite{alon2021extremal}, who established sharp thresholds for the probability $p$ to ensure the connectivity of $\FS(X,Y)$ when $X$ and $Y$ are both independently drawn from $\mathcal{G}(n,p)$. We focus on a single pair of permutations over the vertex set of $X$ and determine a condition on the edge probabilities $p$ and $q$ such that the pair is connected by a short path in $\FS(X,Y)$ when the graphs are drawn from $\mathcal{G}(n,p)$ and $\mathcal{G}(n,q)$ respectively. This result builds upon the $\FS(\Star_n, Y)$ result in \cite{715921}. 

\begin{restatable}{theorem}{randommain}
\label{theorem:randommain}
Let $\tau$ and $\omega$ be arbitrary permutations over $[n]$.
Let $X$ and $Y$ be random graphs over the vertex set $[n]$, independently drawn from $\mathcal{G}(n,p)$ and $\mathcal{G}(n,q)$ respectively, where $p$ and $q$ satisfy $pq\ge 100\log n/n$. Then, with probability at least $1-o(n^{-2})$, the distance between $\tau$ and $\omega$ in $\FS(X,Y)$ is $O(n^5)$.
\end{restatable}

Alon, Defant and Kravitz studied the symmetric case where $p=q$. 
In order to prove connectivity of $\FS(X,Y)$, they required $p> p_0$ where $p_0^2 =\exp(4(\log n)^{2/3})/n$. They conjectured that a threshold of $p^2=\Theta(n^{-1})$ should suffice for connectivity. Milojevi\'c \cite{milojevic2022connectivity} disproved this conjecture, showing that $p^2=\Omega(\log n/n)$ is necessary for connectivity. Wang and Chen~\cite{wang2023connectivity} extended Alon et al.'s result to the asymmetric setting where the graphs $X$ and $Y$ have different edge probabilities $p$ and $q$.
Their result also requires $pq\ge p_0^2$.
In contrast, for our result a slightly smaller bound of $pq>O(1)\cdot \exp(\log\log n)/n$ suffices. This is because we bound the distance between a single pair of permutations with high probability. In particular, our result does not necessarily imply that $\FS(X,Y)$ is connected or has low diameter. We leave open the question of studying the diameter of $\FS(\mathcal{G}(n,p),\mathcal{G}(n,q))$.

Several of our results are shown in \cref{table:our-results}. 

\begin{table}[h!]
\centering
\setlength{\tabcolsep}{10pt}
\begin{tabular}{lll}
\toprule
\textbf{Families} & \textbf{Upper bound on} & \textbf{Lower bound on}\\
 & \textbf{max possible diameter} & \textbf{max possible diameter}\\
\midrule
General & $e^{O(n\log n)}$ \hfill (trivial) & $e^{\Omega(n)}$ \hfill \cite{jeong2022diameters} \\[1ex]
$\FS(K_n, Y)$ & $2n^2 - 5n + 3$ \cite{jeong2022diameters}, \textcolor{blue}{$\binom{n}{2}$} \hfill \cref{sec:specialcases} & \textcolor{blue}{$\binom{n}{2}$} \hfill \cref{sec:specialcases}\\[1ex]
$\FS(\Cycle_n, Y)$ & $8n^4(1 + o(1))$ \hfill \cite{jeong2022diameters} & $\binom{n}{2}$ \hfill \cite{jeong2022diameters}\\[1ex]
$\FS(\Path_n, Y)$ & $\binom{n}{2}$ \hfill \cite{jeong2022diameters} & $\binom{n}{2}$ \hfill \cite{jeong2022diameters}\\[1ex]
$\FS(\Star_n, \Tree_n)$ & \textcolor{blue}{$O(n)$} \hfill \cref{sec:specialcases} & \textcolor{blue}{$\Omega(n)$} \hfill \cref{sec:specialcases}\\[1ex]
$\FS(X, Y)$ where $x > y$ \\and $2x+y > 2n$ & \textcolor{blue}{$O(n^5)$} & Unknown\\[1ex]
$\FS(X, Y)$ where $x > y$ \\and $x+y > 3n/2$ & \textcolor{blue}{$O(n^2)$} & Unknown\\[1ex]
\bottomrule
\end{tabular}
\\[1em]
\caption{Diameter bounds for friends-and-strangers graphs in prior work (black) and our work (blue). These bounds refer to the largest possible diameter for any connected component of the graph. We refer to the minimum degree of $X$ as $x$ and the minimum degree of $Y$ as $y$. }
\label{table:our-results}
\setlength{\tabcolsep}{6pt}
\end{table}

\subsection{Organization of the paper}
In \cref{sec:background}, we present some notation and basic claims which we will use throughout the rest of the paper. Then, in \cref{sec:specialcases}, we bound the diameter of connected components of $\FS(X, Y)$ for special cases of $X$. In \cref{sec:bangachev}, we prove \cref{theorem:mindegreemain,theorem:mindegreesecond}, and in \cref{sec:random}, we prove \cref{theorem:randommain}. Finally, in \cref{sec:openquestions}, we discuss some open directions for future work. 
\section{Preliminaries}\label{sec:background}

\subsection{Notation}

In this subsection, we introduce the terminology which we use throughout this paper. For a graph~$G$, we denote the vertex set of $G$ as $V(G)$ and the edge set of $G$ as $E(G)$. 

In this paper, we will explore several families of graphs.
Here are the most important ones with vertex set $[n]=\{1,\dots,n\}$: 
\begin{itemize}
    \item the \emph{complete graph}, $K_n$, has an edge between any two vertices;
    \item the \emph{path graph}, $\Path_n$, has an edge between $i$ and $j$ when $j = i + 1$;
    \item the \emph{cycle graph}, $\Cycle_n$, has an edge between $i$ and $j$ when $j = i + 1 \bmod{n}$;
    \item the \emph{star graph}, $\Star_n$, has an edge between $i$ and $j$ when $i$ or $j$ is equal to $1$. 
\end{itemize}
We also define some other terms related to graph theory.
\begin{itemize}
    \item A \emph{cut vertex} in a graph is a vertex such that when removed, the graph gains at least one connected component. A \emph{separable} graph has at least one cut vertex. A graph is \emph{biconnected} if it is connected but not separable. 
    \item A graph on $n$ vertices has a \emph{Hamiltonian cycle} if it has a subgraph that is isomorphic to $\Cycle_n$. 
    \item A graph is \emph{$k$-regular} if each vertex has degree $k$. A graph is regular if there exists some $k$ for which it is $k$-regular. 
    \item The \emph{diameter} of a graph is the maximum distance between two vertices in a graph, where the \emph{distance} between two vertices in a graph is defined as the length of the shortest path between the two vertices. A graph's diameter is infinite if it is disconnected. 
    \item
    The minimum degree of a graph $G$, denoted as $\delta(G)$, is the infimum of the degrees of the vertices in $G$. 
\end{itemize}

We now introduce notation related to friends-and-strangers graphs. By identifying $V(X)$ and $V(Y)$, we refer to a vertex of $\FS(X,Y)$ as a {\em permutation} mapping the vertices of $X$ onto the locations in $Y$.
For a permutation $\sigma\in\FS(X,Y)$ and vertex $a\in V(X)$, we use $\sigma(a)\in V(Y)$ to denote the vertex in $Y$ that is located at $a$ in $\sigma$; Likewise, for a vertex $y\in V(Y)$, $\sigma^{-1}(y)$ denotes its location in $X$ under $\sigma$.

A move from a vertex to an adjacent vertex in $\FS(X,Y)$ is called an $(X,Y)$-{\em friendly swap} or, when $X$ and $Y$ are clear from the context, simply a {\em friendly swap} and mathematically denoted as a transposition. In particular, suppose $ab$ is an edge in $Y$ and $\sigma(a)\sigma(b)$ is an edge in $X$, then we represent the permutation resulting from the friendly swap of the elements $\sigma(a)$ and $\sigma(b)$ as $\sigma\circ(ab)$.

\subsection{Preliminary observations and past results}
Here we collect some general results on graphs and permutations that we will use throughout the paper. 

\begin{lemma}\label{lem:hi}
Any connected graph $G$ has a vertex which can be removed (along with its incident edges) such that the remaining graph is still connected. 
\end{lemma}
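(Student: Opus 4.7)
The plan is to exploit spanning trees, together with the elementary fact that every finite tree with at least two vertices has a leaf (a vertex of degree~$1$).

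First, I would handle the trivial case where $G$ has only one vertex: removing that vertex yields the empty graph, which is vacuously connected, so the claim holds. Assume henceforth that $|V(G)|\ge 2$.

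Since $G$ is connected, it contains a spanning tree $T$; I would construct one explicitly, for instance by running a breadth-first or depth-first search from an arbitrary root. Because $T$ is a tree on at least two vertices, it contains some leaf $v$ (one can, for example, take a vertex of maximum depth in the BFS/DFS tree, which must have no children and hence degree $1$ in $T$). Let $u$ denote the unique neighbor of $v$ in $T$.

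I would then argue that $G - v$ is connected by exhibiting a spanning connected subgraph. Removing the single leaf $v$ from $T$ deletes only the edge $uv$, so $T - v$ is again a tree, now spanning the vertex set $V(G)\setminus\{v\}$. Since $T\subseteq G$, we have $T - v \subseteq G - v$, and a supergraph of a connected spanning subgraph is connected. Therefore $G - v$ is connected, proving the lemma.

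The proof has no real obstacle; the only thing to be slightly careful about is the edge case $|V(G)|=1$ (handled by convention) and the observation that the leaf's only incident edge in $T$ is the one being removed, so that $T-v$ remains a tree rather than a disconnected forest.
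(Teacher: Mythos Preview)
Your proof is correct and follows essentially the same approach as the paper: take a spanning tree $T$ of $G$, pick a leaf $v$ of $T$, and observe that $T-v$ is a spanning tree of $G-v$, so $G-v$ is connected. The paper's version is slightly terser (it does not separately handle $|V(G)|=1$ or name the neighbor $u$), but the argument is identical.
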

\begin{proof}

As $G$ is connected, it has some spanning tree $T$, which has some leaf $v$. Removing $v$, we see $T \setminus {v}$ is still a tree, in particular a spanning tree of $G \setminus {V}$, so $G \setminus {V}$ is still connected, as desired. 
\end{proof}

The following result is the aforementioned $\FS(\Star_n, Y)$ result from \cite{715921}. 

\begin{theorem}\label{theorem:kornhauser}
Let $Y$ be a graph on $n$ vertices. Then, the diameter of any connected component of $\FS(\Star_n, Y)$ is $O(n^3)$. 
\end{theorem}

This result forms the basis of several of our results later in this paper. We next present Wilson's result on the connectivity of $\FS(\Star_n, Y)$. 

\begin{theorem}[\cite{wilson1974graph}]\label{lem:wilson}
Let $Y$ be a biconnected graph on $n$ vertices which is not isomorphic to $\Cycle_n$ or $\theta(1,2,2)$. Then, if $Y$ is bipartite, $\FS(\Star_n, Y)$ has exactly $2$ connected components. Vertices are placed in these connected components based on the parity of the permutation without $s$. Otherwise, $\FS(\Star_n, Y)$ is connected. 
\end{theorem}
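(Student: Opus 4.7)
The plan is to fix a vertex $v_0 \in V(Y)$ and study the set $\mathcal{H} \le S_{n-1}$ of permutations of $V(\Star_n) \setminus \{s\}$ that are realizable by sequences of friendly swaps which begin and end with $s$ at $v_0$. The connected component structure of $\FS(\Star_n, Y)$ is then determined by $\mathcal{H}$ together with the freedom to move $s$ around $Y$, which is unrestricted since $Y$ is connected.

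First, I would establish the parity invariant for bipartite $Y$. Each friendly swap acts on the permutation as a transposition, so a sequence of $\ell$ swaps returning $s$ to $v_0$ realizes a product of $\ell$ transpositions in $S_{n-1}$, whose sign equals $(-1)^\ell$. Since $\ell$ equals the length of the closed walk traced by $s$ in $Y$, bipartiteness forces $\ell$ to be even, giving $\mathcal{H} \subseteq A_{n-1}$ and hence at least two connected components. Conversely, if $Y$ is non-bipartite then it contains an odd closed walk, whose corresponding swap sequence realizes an odd permutation, so $\mathcal{H} \not\subseteq A_{n-1}$.

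The main obstacle is the second step: producing a single $3$-cycle inside $\mathcal{H}$. Since $Y$ is biconnected and not a cycle, \Cref{fact:theta} gives a $\theta$-subgraph $H \subseteq Y$ with endpoints $u, w$ joined by three internally disjoint paths $P_1, P_2, P_3$. Each pair $P_i \cup P_j$ is a cycle along which $s$ can be rotated, inducing a cyclic permutation of the vertices on that cycle. I would show that a suitable commutator of two cycle rotations whose cycles share a common path $P_i$ moves exactly three vertices, hence is a $3$-cycle. This requires a careful case analysis on the lengths of $P_1, P_2, P_3$, and it is precisely this commutator construction that fails for $\theta(1,2,2)$, whose paths are too short: the natural commutators there either collapse or act on too many vertices, producing products of disjoint transpositions rather than a $3$-cycle.

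The final step upgrades one $3$-cycle to all of $A_{n-1}$. By \Cref{lem:bicon}, any two edges of $Y$ lie on a common simple cycle, which lets us transport any chosen vertex of $V(\Star_n) \setminus \{s\}$ into a prescribed position in $H$ while restoring the other vertices via a reverse transport; conjugating the $3$-cycle already in $\mathcal{H}$ by such transport sequences yields $3$-cycles on arbitrary triples of $V(\Star_n) \setminus \{s\}$. Since $A_{n-1}$ is generated by $3$-cycles, we obtain $A_{n-1} \subseteq \mathcal{H}$. Combined with the parity analysis, this gives $\mathcal{H} = A_{n-1}$ when $Y$ is bipartite and $\mathcal{H} = S_{n-1}$ otherwise, which translates directly into the claimed number of connected components of $\FS(\Star_n,Y)$.
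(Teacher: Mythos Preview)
The paper does not prove this theorem at all: it is quoted from Wilson~\cite{wilson1974graph} and used as a black box, so there is no ``paper's own proof'' to compare against. Your outline is essentially Wilson's original strategy and is sound in structure; the parity argument, the reduction to finding a single $3$-cycle in $\mathcal{H}$ via a $\theta$-subgraph, and the exclusion of $\theta(1,2,2)$ are all correct and standard.

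Two places in your plan would need real work. First, the commutator construction in Step~3 is not a one-line argument: depending on the path lengths $(i,j,k)$ of the $\theta$-graph, the naive commutator of two cycle rotations can move more than three vertices, and one must either choose the rotations carefully or iterate. The paper, in proving its diameter bounds, carries out exactly this analysis in \Cref{lem:star-theta-rotation} and the case table in \Cref{subsec:theta}; that machinery would serve as a constructive substitute for your Step~3. Second, your Step~4 is slightly glib: conjugating a single $3$-cycle by transport sequences gives $3$-cycles on whichever triples you can bring to the distinguished positions $x,y,z$, and showing that \emph{every} triple can be so transported is not immediate from \Cref{lem:bicon} alone. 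One either proves a transport lemma of the flavor of the paper's \Cref{lem:cyclesequence}, or one argues more abstractly that $\mathcal{H}$ is primitive (transitivity is easy from biconnectivity; primitivity needs a short extra argument) and then invokes the classical fact that a primitive subgroup of $S_{n-1}$ containing a $3$-cycle contains $A_{n-1}$.

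It is worth noting that the paper's constructive results in \Cref{sec:star}, taken together, implicitly re-prove this theorem: \Cref{lemma:star-nonbip,lemma:star-bip,lemma:star-bip-two} show that every transposition (non-bipartite case) or every even permutation (bipartite case) can be realized, which is exactly the content of Wilson's theorem with the bonus of an explicit $O(n^4)$ length bound. So while the paper formally cites Wilson, your approach and the paper's approach to the diameter problem end up running on the same rails.
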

\section{Special Cases of Friends-and-Strangers Graphs}\label{sec:specialcases}
\subsection{$\FS(K_n, Y)$ for general $Y$}

Recall from \cref{table:our-results} that Jeong~\cite{jeong2022diameters} proved an upper bound of $2n^2 - 5n + 3$ for the diameter of connected components of $\FS(K_n, Y)$. We improve this bound to $\binom{n}{2}$ and show this is tight. 

\begin{theorem}\label{theorem:kn}
Let $Y$ be any graph on $n$ vertices. $\FS(K_n, Y)$ is connected if and only if $Y$ is connected. Furthermore, every connected component of $\FS(K_n, Y)$ has diameter less than or equal to $\binom{n}{2}$. Finally, $\FS(K_n, \Path_n)$ has a single connected component with diameter $\binom{n}{2}$. 
\end{theorem}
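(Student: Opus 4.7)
The plan is to handle the three claims of the theorem in order, using a single inductive algorithm to establish both the connectivity characterization and the diameter upper bound.

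For the \emph{connectivity characterization}, the forward direction is immediate: if $Y$ splits into connected components $C_1,\dots,C_k$, then every friendly swap takes place along an edge of $Y$ and so preserves the multiset of tokens residing on each $C_i$. Configurations with different such multiset profiles therefore belong to different components of $\FS(K_n,Y)$. The reverse direction I defer to the constructive algorithm below, which exhibits an explicit short path between any two configurations whenever $Y$ is connected.

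For the \emph{diameter bound}, I induct on $n$, with the case $n=1$ trivial. Given a source $\sigma$ and target $\tau$, I invoke \cref{lem:hi} to pick a vertex $v\in V(Y)$ such that $Y\setminus\{v\}$ is still connected. Let $t$ be the token that $\tau$ assigns to $v$ and let $u$ be its current location under $\sigma$. Because $Y$ is connected, I can choose a path $u=v_k,v_{k-1},\dots,v_0=v$ of length $k\le n-1$ in $Y$. Performing the $k$ adjacent swaps along this path---all legal since $X=K_n$ makes every pair of tokens friends---shifts $t$ from $u$ to $v$ without changing what ends up at $v$ afterwards. The remaining $n-1$ tokens then live on $\FS(K_{n-1},Y\setminus\{v\})$, to which the inductive hypothesis applies, so they can be routed into their $\tau$-positions in at most $\binom{n-1}{2}$ further swaps. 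The total is $(n-1)+\binom{n-1}{2}=\binom{n}{2}$, as desired. This also proves the reverse direction of connectivity, since the algorithm succeeds whenever $Y$ is connected.

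For the \emph{tightness} on $\Path_n$, label the vertices $1,2,\dots,n$ in path order and encode each configuration $\sigma$ as the word $\sigma(1)\sigma(2)\cdots\sigma(n)$. A friendly swap then corresponds exactly to exchanging two consecutive letters, so the distance in $\FS(K_n,\Path_n)$ between $\sigma$ and $\tau$ equals the minimum number of adjacent transpositions needed to rewrite one word as the other---classically, the number of inversions of $\tau^{-1}\circ\sigma$. This count is always at most $\binom{n}{2}$ and attains $\binom{n}{2}$ when $\sigma$ is the identity and $\tau$ is its reverse, giving a matching lower bound. No step of the argument is particularly delicate; the only subtlety is verifying that the recursion inherits a complete friendship graph on the remaining tokens, which is automatic since $K_n$ with one vertex deleted is $K_{n-1}$.
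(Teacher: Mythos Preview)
Your argument is correct and mirrors the paper's almost exactly: the paper phrases your induction as an iteration (growing a set $S$ of already-correct vertices one at a time via \cref{lem:hi}, with the $i$-th step costing at most $n-i$ swaps), and it uses the same inversion argument for the lower bound on $\Path_n$. One small omission: your diameter bound is only established for connected $Y$, whereas the statement asserts it for every connected component of $\FS(K_n,Y)$ even when $Y$ is disconnected; the paper covers this by applying the connected case to each component $C_i$ of $Y$ separately and noting $\sum_i\binom{|C_i|}{2}\le\binom{n}{2}$, and your argument extends the same way.
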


\begin{proof}
We will first assume that $Y$ is connected. We will show that $\FS(K_n, Y)$ is connected and its diameter is bounded by $\binom{n}{2}$. 
Let $\sigma$ be our current permutation, and $\tau$ our target permutation. For any vertex $v$ in $Y$, we will define $f(v) = \sigma(\tau^{-1}(v))$. This is the current location of the vertex in $K_n$ that we want to move to $v$.
Let $S\subset V(Y)$ be initialized as the empty set. Consider the following procedure, where we will maintain the invariant that the induced subgraph on $V(Y)\setminus S$ is connected:
\begin{enumerate}
    \item Let $v$ be a vertex in $V(Y)\setminus S$ such that the induced subgraph on $V(Y)\setminus (S \cup \{v\})$ is connected, which always exists by \cref{lem:hi}.
    \item Then, there exists a path in the induced subgraph on $V(Y) \setminus S$ from $v$ to $f(v)$. Make $(K_n, Y)$-friendly swaps until the person currently on $f(v)$ has moved to $v$.
    \item Add $v$ to $S$, and repeat this procedure until $S = V(Y)$.
\end{enumerate}
Note that at the end of this procedure, all people are in their intended positions, as once the person at $f(v)$ has moved to $v$, no friendly swaps are made involving $v$ for the rest of the procedure as henceforth $v\in S$.
Finally, each iteration of our procedure takes a maximum of $n - \abs{S} - 1$ friendly swaps, meaning that the total number of friendly swaps made is at most $(n-1) + (n-2) + \cdots = \binom{n}{2}$.

If $Y$ is not connected, for a permutation $\sigma$ and a connected component $C$ in $Y$, let $\sigma^{-1}(C)$ denote the set of vertices of $K_n$ located on $C$. For two permutations $\sigma$ and $\tau$, we claim that $\sigma$ and $\tau$ are connected if and only if for all connected components $C$ in $Y$, $\sigma^{-1}(C)=\tau^{-1}(C)$. On the one hand, if the latter condition does not hold, then there exists a vertex $v$ in $K_n$ that is located at different components in $Y$ under $\sigma$ and $\tau$. But no friendly move can move the vertex from one component to another, so $\sigma$ and $\tau$ cannot be connected. On the other hand, if the condition holds, then we can transform $\sigma$ to $\tau$ by transforming the permutation over each connected component in sequence. The number of steps taken, when $Y$ has connected components $C_1, \ldots, C_k$, is $\sum_{i\le k} \binom{\abs{C_i}}{2} \le \binom{n}{2}$.

We will now show that equality holds in \cref{theorem:kn} when $Y$ is isomorphic to $\Path_n$. 
We will number the vertices in $K_n$ $1, \cdots, n$, and number the vertices in $\Path_n$ $1, \cdots n$. 
Let $\sigma$ be the permutation $n, n-1, \cdots, 1$ of the vertices of $K_n$ onto $\Path_n$, and let $\tau$ be the permutation $1, 2, \ldots, n$. 
Define an \emph{inversion} in a permutation $\omega$ to be a pair $(i, j)$ of vertices in $K_n$ such that $i > j$ but $\omega(i) < \omega(j)$.

Observe that the number of inversions in $\sigma$ is $\binom{n}{2}$. 
Each $(K_n, \Path_n)$-friendly swap decreases the number of inversions of our current permutation by at most $1$. Hence, if the people start in the configuration $\sigma$, it will take them at least $\binom{n}{2}$ friendly swaps to reach the configuration $\tau$, which has $0$ inversions. 
\end{proof}

\subsection{$\FS(\Star_n, Y)$ when $Y$ is a forest} The main result of this section is that when $Y$ is a forest, every connected component in $\FS(\Star_n, Y)$ has linear diameter. We begin by proving this statement for trees.

\begin{lemma}\label{lem:trees}
Let $T$ be a tree on $n$ vertices. Then every connected component of $\FS(\Star_n, T)$ is isomorphic to $T$. 
\end{lemma}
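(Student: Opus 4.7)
The plan is as follows. Since $\Star_n$ has center vertex $1$ and leaves $2,\dots,n$, every friendly swap in $\FS(\Star_n,T)$ must involve token $1$: it exchanges token $1$ with whichever token currently occupies a $T$-neighbor of token $1$'s location. Consequently, along any sequence of moves the location of token $1$ traces a walk in $T$, and each step of the walk corresponds to exactly one swap.

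The central observation I would establish is a backtracking-cancellation lemma: if the walk of token $1$ ever contains a subsequence $v \to w \to v$, then the two corresponding friendly swaps cancel, leaving the configuration of all tokens unchanged. This is immediate because the first swap exchanges token $1$ with the token at $w$, and the second swap exchanges them right back. Since $T$ is a tree, iteratively deleting such backtracking pairs from any walk in $T$ from $u$ to $v$ reduces it to the unique simple $u$-to-$v$ path in $T$, and by the cancellation lemma the configuration at the end is unchanged at each reduction. Hence the configuration reached from a fixed start $\sigma$ depends only on the final location of token $1$, and is in fact obtained by performing the swaps along the unique simple path in $T$ from $\sigma^{-1}(1)$ to that location.

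Fixing a starting configuration $\sigma$, I would then define $\phi\colon V(T)\to V(\FS(\Star_n,T))$ by letting $\phi(v)$ be the configuration obtained from $\sigma$ by moving token $1$ to $v$ along the unique simple $T$-path. It remains to check three things: (i) $\phi$ is injective, which is clear since $\phi(v)$ has token $1$ at $v$; (ii) the image of $\phi$ is the entire connected component of $\sigma$, since by the previous paragraph every configuration reachable from $\sigma$ is some $\phi(v)$, and conversely each $\phi(v)$ is reachable by construction; (iii) $\phi$ is a graph isomorphism onto this component, since $\phi(v)$ and $\phi(w)$ differ by exactly one friendly swap precisely when $vw\in E(T)$.

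The only subtlety is the backtracking-cancellation step, which must be stated for walks of arbitrary length, not just single detours. I would handle this by induction on walk length: given a walk that is not already a simple path, locate the first repeated vertex, which exhibits a subwalk of the form $v\to\cdots\to v$; using the tree property, this subwalk itself contains an immediate backtrack $v'\to w\to v'$, which can be deleted to give a shorter walk with the same starting and ending configurations, completing the induction.
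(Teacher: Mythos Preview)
Your proof is correct and follows essentially the same approach as the paper: both arguments hinge on the backtracking-cancellation observation (a $v\to w\to v$ step leaves the configuration unchanged), use the tree property to reduce any walk of the center token to the unique simple path, and conclude that the component is indexed by the location of the center, with adjacency matching $E(T)$. Your version is a bit more explicit in defining the isomorphism $\phi$ and verifying its properties, but the substance is identical.
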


\begin{proof}
Let $s$ be the center vertex in $\Star_n$. Fix a starting configuration with $s$ on vertex $v$, and let the connected component of $\FS(\Star_n, T)$ containing this permutation be $C$. We claim that there are precisely $n$ vertices in $C$, each of which has $s$ at a distinct vertex of $T$. 

Suppose that we move $s$ to another vertex $w$ through some sequence of friendly swaps. Observe that if at any point we move $s$ from some vertex $a$ to $b$ and then back from $b$ to $a$, the configuration has not changed (we have essentially done and undone an action). Hence, if $s$ is moved to $w$ in a non-simple path, then we can delete some friendly swaps from this sequence of moves. Hence, we may assume that $s$ is moved to $w$ in a simple path. Because there is exactly one simple path from $v$ to $w$ in $T$, we conclude that there is exactly one configuration with $s$ at $w$ in $C$. This shows that the number of vertices in $C$ is $n$.

To finish the claim, observe that two configurations in $C$ are adjacent if and only if the positions of $s$ in these configurations are adjacent. Hence, it follows that $C$ is isomorphic to $T$, with a natural bijection $T\to C$ mapping any vertex $v$ in $T$ to the position in $C$ with $s$ at $v$.
\end{proof}

We obtain the bound for forests as a simple corollary.
\begin{theorem}\label{theorem:forest}
Let $F$ be a forest on $n$ vertices. Then, the connected components of $\FS(\Star_n, F)$ have diameter at most $n - 1$. 
\end{theorem}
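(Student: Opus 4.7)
The plan is to reduce to \Cref{lem:trees} by observing that, in a forest, the center $s$ of $\Star_n$ can never leave the tree it starts in, and the leaves of $\Star_n$ occupying other trees are stranded.

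More concretely, let $F$ have tree components $T_1,\dots,T_k$ with $|V(T_i)|=n_i$, and fix a starting configuration $\sigma$. Suppose $s$ sits on a vertex of $T_i$. Any $(\Star_n,F)$-friendly swap must involve $s$ and an adjacent position in $F$; since every edge of $F$ stays within a single tree, $s$ can only move inside $T_i$. Likewise, any leaf of $\Star_n$ located on some $T_j$ with $j\neq i$ has no friend at an adjacent position (its only friend is $s$, which is on $T_i$), so such leaves are frozen. Thus every configuration in the connected component $C$ containing $\sigma$ restricts to the same permutation on $V(F)\setminus V(T_i)$.

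Next I would argue that the restriction of $C$ to $T_i$ is precisely a connected component of $\FS(\Star_{n_i},T_i)$: the $n_i$ people occupying $V(T_i)$ consist of $s$ together with $n_i-1$ of the leaves, whose induced friendship graph is a star on $n_i$ vertices (with $s$ as center), and their legal swaps are exactly the $(\Star_{n_i},T_i)$-friendly swaps. By \Cref{lem:trees}, this component is isomorphic to $T_i$, hence has diameter at most $n_i-1\leq n-1$, which gives the desired bound on the diameter of $C$.

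I do not anticipate any real obstacle; the only thing to be slightly careful about is verifying that the bijection between swaps of $\FS(\Star_n,F)$ inside $C$ and swaps of the corresponding component of $\FS(\Star_{n_i},T_i)$ is exact, which follows immediately from the two observations above (no cross-tree moves, and other-tree leaves are fixed).
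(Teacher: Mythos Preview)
Your argument is correct and follows essentially the same route as the paper: both observe that the leaves on tree components not containing $s$ are frozen, reduce to the tree $T_i$ containing $s$, and invoke \Cref{lem:trees} to conclude that the component is isomorphic to $T_i$ and hence has diameter at most $n_i-1\le n-1$. The paper's version is a touch more terse, but the content is identical.
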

\begin{proof}
Again letting $s$ denote the central vertex in $\Star_n$, consider an arbitrary starting position $\sigma\in V(\FS(\Star_n,F))$, viewed as a permutation of people from $\Star_n$ onto positions from $F$.
Observe that if $C$ is a connected component of $F$ which $s$ is not in, then the people on $C$ cannot be moved.
Hence, if $C'$ is the connected component which $s$ is placed in, then by \cref{lem:trees}, the connected component containing $\sigma$ must be isomorphic to $C'$, so has diameter at most $\abs{C'}-1\leq n-1$.
\end{proof}

\section{General Diameter Bounds Depending on Minimum Degree}\label{sec:bangachev}

In \cite{bangachev2022asymmetric}, several conditions on the minimum degrees of $X$ and $Y$ are given to guarantee the connectedness of $\FS(X, Y)$. In this section, we will prove two similar theorems, but we will focus on bounding the diameter of our friends-and-strangers graphs rather than proving their connectivity. Our first result is based on a result of Bangachev \cite{bangachev2022asymmetric}. 

\begin{theorem}[\cite{bangachev2022asymmetric}]
Let $X$ and $Y$ be connected graphs on $n$ vertices such that \begin{align*}
\min(\delta(X), \delta(Y)) + 2\max(\delta(X), \delta(Y)) \geq 2n.
\end{align*}
Then, $\FS(X, Y)$ is connected. 
\end{theorem}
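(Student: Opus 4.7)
The plan is to reduce the problem to the already-established connectivity of $\FS(K_n, Y)$ for connected $Y$ (\cref{theorem:kn}). Concretely, I aim to prove the following \emph{swap lemma}: for every configuration $\sigma$ and every edge $uv \in E(Y)$, the configuration $\sigma \circ (uv)$ is reachable from $\sigma$ in $\FS(X,Y)$. Granting this, every friendly swap available in $\FS(K_n, Y)$ is simulable in $\FS(X,Y)$, and since $\FS(K_n, Y)$ is connected by \cref{theorem:kn} whenever $Y$ is, the connectivity of $\FS(X,Y)$ follows.

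Without loss of generality assume $\delta(X) \geq \delta(Y)$, so the hypothesis gives $3\delta(X) \geq \delta(Y) + 2\delta(X) \geq 2n$, forcing $\delta(X) \geq \lceil 2n/3 \rceil$, and also $\delta(Y) \geq \max(2n - 2\delta(X), 0)$. Two useful structural consequences follow: any two vertices of $X$ share at least $2\delta(X) - n \geq n/3$ common neighbors, so the friendship graph is close to complete; and for any $S \subseteq V(X)$ with $|S| \leq n/3$, the induced subgraph $X \setminus S$ remains connected, giving considerable freedom to route tokens without losing $X$-connectivity.

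To prove the swap lemma, let $a = \sigma(u)$ and $b = \sigma(v)$. If $ab \in E(X)$, a single friendly swap suffices. Otherwise, I would seek a helper token $c \in V(X) \setminus \{a,b\}$ that is a common friend of $a$ and $b$ in $X$ and whose current location $w = \sigma^{-1}(c)$ is either a common $Y$-neighbor of $u$ and $v$ already, or can be brought to one by a reversible sequence of friendly moves. Once $c$ occupies such a $w$, the triangle on $\{u,v,w\}$ in $Y$ together with the friendships $ac, bc \in E(X)$ enables a three-step sequence: swap $a$ with $c$ along $uw$, then swap the tokens at $u$ and $v$ (now $c$ and $b$) along $uv$, then swap the tokens at $v$ and $w$ (now $c$ and $a$) along $vw$. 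The net effect transposes $a$ and $b$ while restoring $c$ to $w$, and reversing any helper-routing sequence completes the simulation of $\sigma \circ (uv)$.

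The main obstacle is the swap lemma, in particular guaranteeing both the existence of a suitable helper $c$ and a reversible route that delivers it adjacent to both $u$ and $v$. A direct pigeonhole count produces an in-place helper only when $2\delta(X) + 2\delta(Y)$ comfortably exceeds $3n$, which is not immediately implied by the hypothesis $\delta(Y) + 2\delta(X) \geq 2n$; closing the gap requires either allowing longer helper routings through $Y$ or exploiting the complementary tradeoff between $\delta(X)$ and $\delta(Y)$ forced by the inequality. In the extreme case $\delta(X) = n-1$, $X$ is complete and every swap is already friendly, while when both minimum degrees balance near $2n/3$ both graphs are highly structured with abundant common neighbors, and standard routing arguments apply; the bulk of the technical work lies in interpolating between these regimes through a case analysis on the relative sizes of $\delta(X)$ and $\delta(Y)$.
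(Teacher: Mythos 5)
There is a genuine gap, and it sits exactly where you flagged it: the swap lemma. Your simulation of a $Y$-edge swap needs a position $w$ that is a common $Y$-neighbor of $u$ and $v$ (i.e., the edge $uv$ must lie on a triangle of $Y$) and whose occupant is a common $X$-friend of $a$ and $b$. The pigeonhole count for an in-place helper requires roughly $2\delta(X)+2\delta(Y)>3n$, which is the hypothesis of \cref{theorem:mindegreesecond}, not of this theorem: under $\min(\delta(X),\delta(Y))+2\max(\delta(X),\delta(Y))\ge 2n$ with, say, $\delta(X)\ge\delta(Y)$, the smaller degree $\delta(Y)$ can be far below $n/2$ (as small as about $2n-2\delta(X)$, e.g.\ a bounded constant when $X$ is nearly complete but not complete). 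In that regime $u$ and $v$ need not have \emph{any} common neighbor in $Y$, so there is no valid landing spot $w$ at all, and no amount of ``routing the helper'' can repair this -- the failure is in $Y$'s local structure, not in where the helper token currently sits. Your proposed fallback (longer routings, interpolating case analysis) is precisely the missing content, and it is not clear it can be made to work in this form: simulating an arbitrary $Y$-edge swap of two non-friends when $\delta(Y)$ is tiny is essentially as hard as the theorem itself.

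It is also worth noting that your reduction runs in the opposite direction from the argument the paper builds on (this theorem is quoted from Bangachev; the paper's own proof of the diameter analogue, \cref{theorem:mindegreemain}, shows the intended mechanism). There one simulates a swap of two tokens adjacent in $X$ (friends) at \emph{arbitrary} positions: the positions in $Y$ of the closed $X$-neighborhood of one of the two tokens form a set $Q$ with $\delta(Y|_Q)>\tfrac12|Q|$, so $Y|_Q$ is Wilsonian, $\FS(\Star_{|Q|},Y|_Q)$ is connected, and the friend swap can be realized inside $Q$; one then concludes via connectivity of $\FS(X,K_n)$ (using only that $X$ is connected), rather than via $\FS(K_n,Y)$ as in your plan. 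That direction exploits the asymmetry of the hypothesis -- the large minimum degree guarantees a big neighborhood whose image is dense in the other graph -- which is exactly the leverage your triangle-based simulation lacks.
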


We improve on this result by showing that under the same conditions established in this theorem, the diameter of $\FS(X, Y)$ is polynomially bounded. 

\mindegreemain*

\begin{proof}
For a graph $G$, we denote the induced subgraph over a subset $S\subset V(G)$ as $G\mid_S$. Furthermore, for a vertex $i$ in $G$, define $N[i]$ as the closed neighborhood of $i$, or in other words the union of $\{i\}$ and the set of vertices in $G$ which are adjacent to $i$. 

Let $\sigma$ denote a bijection from $V(X)$ onto $V(Y)$, and let $u$ and $v$ be vertices in $Y$, with $\inv u \coloneq\sigma(u)$ and $\inv v \coloneq\sigma(v)$, such that $\inv u$ and $\inv v$ are adjacent in $X$. Let $Q$ be $\sigma(N[\sigma^{-1}(u)])$. In \cite{bangachev2022asymmetric}, it is shown that $\delta(Y|_Q) > \frac{1}{2}|Q|$.

We split our proof into two parts: first we will show that $\sigma$ and $\sigma \circ (\inv u, \inv v)$ lie in the same connected component of $\FS(X, Y)$ at distance at most $O(n^3)$. We refer to a sequence of friendly swaps from $\sigma$ to $\sigma \circ (\inv u, \inv v)$ as a $(u, v)$-exchange. Then, we will show that we need at most $\binom{n}{2}$ exchanges to go from any permutation $\sigma$ to any other permutation $\tau$ in $\FS(X, Y)$. These two facts complete the proof. 

For the first fact, we cite another theorem in Bangachev, which says that $\FS(\Star_n, Y)$ is connected if $\delta(Y) > \frac{n}{2}$. Hence, by \cref{theorem:kornhauser} we conclude that the diameter of $\FS(\Star_{|Q|}, Y|_Q)$ is at most $O(|Q|^3) \leq O(n^3)$.
Furthermore, $X|_{\sigma(Q)}$ clearly has a subgraph isomorphic to $\Star_{|Q|}$, because $u$ is adjacent to every vertex in this graph. Hence, we conclude that $\FS(\Star_{|Q|}, Y|_Q)$ is a subgraph of $\FS(X|_{\sigma(Q)}, Y|_Q)$, and the latter graph has diameter $O(n^3)$. The assertion follows, because this graph is connected and we can therefore clearly switch the positions of $\inv u$ and $\inv v$ in $O(n^3)$ moves. 

For the second assertion, define a graph $H$ where the vertices are permutations over $V(X)$. Two permutations $\sigma$ and $\sigma'$ are adjacent in $H$ iff $\sigma'=\sigma\circ (\inv u, \inv v)$ for some $\inv u, \inv v\in V(X)$ where $\sigma(\inv u)$ and $\sigma(\inv v)$ are adjacent in $Y$.
Then, as we showed in the first part, any two permutations $\sigma$ and $\sigma'$ that are adjacent in $H$ lie at distance at most $O(n^3)$ in $\FS(X, Y)$. Furthermore, $H$ is clearly isomorphic to $\FS(X, K_n)$, which we have proven has diameter at most $\binom{n}{2}$. This completes the second part, and we are done. 
\end{proof}

Our second result provides a much stronger diameter bound than the first result, given a similar but stronger condition on $\delta(X)$ and $\delta(Y)$.

\mindegreesecond*

\begin{proof}
Let $\sigma$ denote the bijection from $V(X)$ onto $V(Y)$.
Let $p$ and $q$ be vertices in $X$ such that $pq$ is an edge and let $a=\sigma(p)$ and $b=\sigma(q)$. We will prove the existence of a vertex $r$ in $Y$ that satisfies the following properties: (i) $r$ is adjacent to $a$ in $Y$; (ii) $r$ is adjacent to $b$ in $Y$; (iii) $\sigma^{-1}(r)$ is adjacent to $p$ in $X$; and, (iv) $\sigma^{-1}(r)$ is adjacent to $q$ in $X$. 

At most $n-1-\delta(Y)$ fail the condition (i); at most $n-1-\delta(Y)$ fail (ii); at most $n-1-\delta(X)$ fail (iii); and at most $n-1-\delta(X)$ fail (iv). In all, at most $4n - 4 - 2\delta(X) - 2\delta(Y) < n - 2$ vertices fail to satisfy one or more of these conditions. Therefore, there exists at least one vertex other than $a$ and $b$ that satisfies all of these conditions. Call it $r$.

Because $r$ is adjacent to $a$ and $b$ and $\sigma^{-1}(r)$ is adjacent to $p$ and $q$, we can perform the sequence of friendly swaps $(r, p)$, $(p, q)$, $(r, q)$, after which the positions of $p$ and $q$ are swapped. Hence, for any vertices $p$ and $q$ in $X$ for which $pq$ is an edge, it is possible to swap the positions of $p$ and $q$ in $3$ friendly swaps. 

Finally, observe that the diameter of $\FS(X, K_n)$ is at most $\binom{n}{2}=\frac{n(n-1)}{2}$. Hence, if we take a sequence of moves from any configuration in $\FS(X, K_n)$ to any other configuration, and replace each move with it corresponding sequence of $3$ friendly swaps, we can move from any configuration to any other configuration in $\FS(X, Y)$ in at most $\frac{3n(n-1)}{2}$ friendly swaps. 
\end{proof}

We remark that Bangachev \cite{bangachev2022asymmetric} proves another connectivity result of a similar flavor as the connectivity analog of \cref{theorem:mindegreemain}; see \cref{fig:Banga} for a visual comparison of the regimes that these results apply to. We state this result below for completeness.

\begin{theorem}\protect{\cite[Theorem 1.4]{bangachev2022asymmetric}}
Let $X$ and $Y$ be graphs on $n$ vertices. If $\delta(X) > n/2$, $\delta(Y) > n/2$, and $2\min(\delta(X), \delta(Y)) + 3\max(\delta(X), \delta(Y)) \geq 3n$, then $\FS(X, Y)$ is connected. 
\end{theorem}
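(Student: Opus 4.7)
The plan is to show that every edge-transposition in $X$ is realizable in $\FS(X,Y)$; since $X$ is connected, such transpositions generate $S_n$ and connectivity of $\FS(X,Y)$ follows. Assume without loss of generality that $\delta(Y) \ge \delta(X)$, so the hypothesis reads $2\delta(X) + 3\delta(Y) \ge 3n$. Combined with $\delta(X) > n/2$, this forces $\delta(Y) > 2n/3$, placing $Y$ in a particularly dense regime: any three vertices of $Y$ share a common neighbor (since $3\delta(Y) - 2n > 0$), and any pair shares at least $2\delta(Y) - n$ common neighbors; in $X$, any pair shares at least $2\delta(X) - n > 0$ common neighbors. Neither graph is bipartite (any bipartite graph on $n$ vertices has minimum degree at most $n/2$), so no parity obstruction arises.

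The central technical claim is: for every configuration $\sigma$ and every edge $pq \in E(X)$, the configurations $\sigma$ and $\sigma \circ (pq)$ lie in the same connected component. The tool is the ``triangle trick'' already used in \cref{theorem:mindegreesecond}: if one can locate $w \in V(X)$ with $pw, qw \in E(X)$ whose image $c = \sigma(w)$ satisfies $c\sigma(p), c\sigma(q) \in E(Y)$, then the three friendly swaps $(p\,w), (p\,q), (q\,w)$ compose to the swap of $p$ and $q$ (with $w$ restored to $c$), and notably this does \emph{not} require $\sigma(p)\sigma(q) \in E(Y)$.

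The main obstacle is that a witness $w$ of this form exists directly only when $2\delta(X) + 2\delta(Y) \ge 3n + O(1)$, which is strictly stronger than the hypothesis. The key idea is a preparatory phase: independently produce (a) any $w \in V(X)$ with $pw, qw \in E(X)$, which exists since these common friends number at least $2\delta(X) - n > 0$, and (b) any $c \in V(Y)$ with $c\sigma(p), c\sigma(q) \in E(Y)$, which exists by the analogous count in $Y$; then relocate $w$ to position $c$ using friendly swaps that leave $p$ and $q$ (and thus the positions $\sigma(p), \sigma(q)$) undisturbed. Such a relocation uses only moves avoiding both vertices $p, q$ and positions $\sigma(p), \sigma(q)$, so it takes place entirely inside the induced instance $\FS(X', Y')$ where $X' = X[V(X) \setminus \{p,q\}]$ and $Y' = Y[V(Y) \setminus \{\sigma(p),\sigma(q)\}]$; achievability of the relocation thus reduces to connectivity of $\FS(X', Y')$.

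The hard part is closing the induction on $n$. In the induced instance, minimum degrees drop by at most $2$, giving $2\delta(X') + 3\delta(Y') \ge 2\delta(X) + 3\delta(Y) - 10 \ge 3(n-2) - 4$, so the hypothesis survives only up to an additive constant loss. I would handle this by strengthening the inductive hypothesis to $2\min + 3\max \ge 3n - C$ for a suitable constant $C$, verifying the base cases $n \le n_0$ by direct enumeration, and tracking that $\delta(X'), \delta(Y') > n'/2$ persists (noting the min/max roles may swap in the induced instance). A cleaner alternative I would explore first is to bypass self-induction: invoke Bangachev's own star-connectivity result --- that $\FS(\Star_m, H)$ is connected whenever $\delta(H) > m/2$ --- to execute the relocation of $w$ inside $Y'$, using a star embedded at an appropriate hub within $X'$ as the mediator. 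This sidesteps the recursive bookkeeping entirely and is where the coefficient $3$ on $\max$ in the hypothesis appears to be tight, since the density $\delta(Y') > n'/2$ that triggers star-connectivity is precisely what is guaranteed by $\delta(Y) > 2n/3$.
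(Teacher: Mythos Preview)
This theorem is not proved in the paper; it is quoted from \cite{bangachev2022asymmetric} for context, and the paper explicitly leaves the diameter analog as an open question. There is therefore no proof here to compare your proposal against.

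On the proposal itself: the overall plan (realize every $X$-edge transposition via the triangle trick after a preparatory relocation of a common $X$-friend $w$ onto a common $Y$-neighbor $c$) is the right shape. However, there is a concrete arithmetic error that breaks both of your suggested closures. You claim that $\delta(Y)\ge\delta(X)$ together with $\delta(X)>n/2$ and $2\delta(X)+3\delta(Y)\ge 3n$ forces $\delta(Y)>2n/3$. It does not: take $\delta(X)=\delta(Y)=\lceil 3n/5\rceil$, where all three hypotheses hold but $\delta(Y)\approx 3n/5<2n/3$. Your ``cleaner alternative'' via star-connectivity relies on this false bound to get $\delta(Y')>n'/2$ after deleting two vertices, so that route does not close as written (and neither does the auxiliary claim that any three vertices of $Y$ have a common neighbor). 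The self-inductive route also fails: deleting $p,q$ and $\sigma(p),\sigma(q)$ drops $2\min+3\max$ by up to $10$ while the threshold $3n$ drops by only $6$, a net loss of $4$ per step; no fixed additive slack $C$ can absorb a loss that compounds over $\Theta(n)$ recursive calls. The condition $\delta>n/2$ likewise degrades at the boundary.

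The missing idea is to localize rather than globally delete. As in the proof of \cref{theorem:mindegreemain} in this paper (which follows Bangachev), one restricts to the image of the closed $X$-neighborhood of a suitable hub and shows that the induced subgraph of $Y$ on that set has minimum degree exceeding half its size, so star-connectivity applies there directly. The inequality $2\min+3\max\ge 3n$ enters through that local degree count, not through a global bound on $\delta(Y)$.
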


We leave open the question of finding the diameter analog of this result.
\section{Diameter Bounds for Erd\H{o}s-R\' enyi Random Graphs}\label{sec:random}

In this section we prove \cref{theorem:randommain}, which we restate below for convenience.
\randommain*

As in the work of Alon, Defant and Kravitz \cite{alon2021extremal}, our proof relies on the notion of Wilsonian graphs, as defined below. \cref{lem:wilson} then shows that if $Y$ is Wilsonian, then $\FS(\Star_n, Y)$ is connected. We will use this fact in our argument.

\begin{definition}
\label{def:wilson}
    We say a graph $G$ is {\em Wilsonian} if it is biconnected, non-bipartite, and neither a cycle graph with at least 4 vertices nor isomorphic to $\theta(1,2,2)$.
\end{definition}

We will also need the following lemma about biconnectivity in random graphs.
\begin{lemma}
\label{lem:random-bicon}
    Let $X\sim\mathcal{G}(n,p)$ with $p\ge 20\log n/n$. Then $X$ is biconnected with probability at least $1-o(n^{-8})$.
\end{lemma}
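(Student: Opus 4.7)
The plan is to use the characterization that a graph $G$ is biconnected if and only if $G$ is connected and for every vertex $v\in V(G)$ the induced subgraph $G\setminus\{v\}$ is connected. This reduces the problem to bounding the probability that either $G$ or one of the single-vertex deletions $G\setminus\{v\}$ fails to be connected, after which I would union-bound over the $n$ choices of $v$.

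The main quantitative input is the standard first-moment bound for disconnectivity of Erd\H{o}s--R\'enyi graphs. If $H\sim\mathcal{G}(m,p)$, then $H$ is disconnected exactly when there exists a subset $S\subset V(H)$ with $1\le |S|\le m/2$ having no edges to its complement, so
\[\Pr[H\text{ disconnected}]\le \sum_{k=1}^{\lfloor m/2\rfloor}\binom{m}{k}(1-p)^{k(m-k)}\le \sum_{k=1}^{\lfloor m/2\rfloor}m^{k}e^{-pk(m-k)}.\]
For $p\ge 20\log m/m$ the dominant $k=1$ term is of order $m\cdot e^{-p(m-1)}\le m^{-19+o(1)}$, and a short calculation shows the remaining terms are smaller (they decay geometrically in $k$ for $k\le m/2$). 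Applied to $G\sim\mathcal{G}(n,p)$, this yields $\Pr[G\text{ disconnected}]=O(n^{-18})$.

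Next I would apply the same bound to $G\setminus\{v\}$ for each fixed $v$. Since deleting a vertex preserves the independence of the remaining edge variables, $G\setminus\{v\}\sim\mathcal{G}(n-1,p)$, and for $n$ large enough the hypothesis $p\ge 20\log n/n$ implies $p\ge 19\log(n-1)/(n-1)$, so $\Pr[G\setminus\{v\}\text{ disconnected}]=O(n^{-17})$. A union bound over the $n$ choices of $v$, combined with the connectivity bound for $G$ itself, gives
\[\Pr[G\text{ not biconnected}]\le \Pr[G\text{ disconnected}]+\sum_{v\in V(G)}\Pr[G\setminus\{v\}\text{ disconnected}]=O(n^{-16})=o(n^{-8}).\]

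I do not anticipate any real obstacle: the argument is a textbook-style union bound, and the constant $20$ in the hypothesis $p\ge 20\log n/n$ is comfortably large enough to absorb both the extra factor of $n$ from ranging over deleted vertices and the lower-order terms in the subset-size sum. The only minor point of care is verifying that $p$ remains above the connectivity threshold for $\mathcal{G}(n-1,p)$, which is immediate.
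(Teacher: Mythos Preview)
Your approach is correct and takes a different, somewhat cleaner route than the paper. The paper argues directly that no vertex is a cut vertex: if $v$ separates $X$ into pieces $M$ and $N$ with $|M|\le|N|$, it splits into the cases $|M|\le 20$ (handled by a Chernoff bound on the degree of each vertex of $M$ into $V(X)\setminus M$) and $|M|>20$ (handled by the no-edges-across bound $(1-p)^{|M|(n-1-|M|)}$), then union-bounds over $M$ and $v$. Your reduction---biconnectivity is exactly connectivity of $G$ together with connectivity of every $G\setminus\{v\}$---lets you invoke the standard first-moment subset bound for $\mathcal{G}(m,p)$ uniformly, with the single factor of $n$ from the union bound over $v$ easily absorbed by the slack in the exponent. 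This is more modular and avoids the ad hoc case split; the paper's argument is essentially doing the same subset count in its second case, with the first case serving only to handle the edge effect that a cut vertex may itself lie adjacent to $M$. Both routes give far more than the required $o(n^{-8})$.
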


\begin{proof}
    As having higher $p$ increases the probability that $X$ is biconnected, without loss of generality assume $p=20\log n/n$.
    We will show that $X$ is biconnected by showing that $X$ does not contain a cut vertex with high probability. We will compute the probability that there exists a vertex $v$ which is a cut vertex. If $v$ is a cut vertex, let $M$ and $N$ be the vertex sets of the connected components of $X \setminus \{v\}$. Assume without loss of generality that $\abs{M} \leq \abs{N}$. 

    We will split this into two cases: \begin{enumerate}
        \item We first consider the case $\abs{M}\leq 20$.
        Fix any nonempty subset $M\subseteq V(X)$ of cardinality at most 20. Then for any vertex in $M$, the number of edges it has with the $n-\abs{M}\geq n-20$ vertices in $V(X)\setminus M$ is at most 1, namely an edge to $v$.
        We will use Chernoff bounds to bound the probability that any fixed $x\in M$ has at most 1 edge to $V(X)\setminus M$.
        Note that these events for different $x\in M$ are mutually independent.

        The expected number of edges among these $n-\abs{M}$ possible edges is
        \[ \mu=20\log n - \frac{20M\log n}{n} \geq 20\log n - \frac{400\log n}{n}. \]
        Therefore, if we define $\delta=\frac{\mu-1}{\mu}$, then the probability $x\in M$ has at most 1 edge to $V(X)\setminus M$ is at most $e^{-\delta^2\mu/2}$.
        For large enough $n$, we have $\delta^2 \geq 0.991$, so this value is bounded from above by \begin{align*}
            e^{-0.991\mu/2} < n^{-9.9}.
        \end{align*}
        
        To finish this case, note that our analysis does not assume $v$ is fixed; we have shown that for fixed $M$, the probability $X$ has a cut vertex that separates $X$ into two connected components, the smaller of which is $M$, is at most $n^{-9.9\abs{M}}$.
        In other words, this $n^{-9.9\abs{M}}$ bound includes all possible values of $v$.
        Thus, it only remains to take a union bound over all possibilities for $M$, so the probability of this case occurring is at most \begin{align*}
            \sum_{\abs{M} = 1}^{20} \binom{n}{\abs{M}} \cdot (n^{-9.9})^{\abs{M}}
            = o(n^{-8}).
        \end{align*}
        \item We will now tackle the case in which $\abs{M}$ is greater than $20$. In this case, there are no edges between $M$ and $N$. However, there are $\abs{M}(n - 1 - \abs{M})$ pairs of vertices in $M$ and $N$. Hence, the probability of this case occurring is at most \begin{align*}
            \sum_{r = 21}^{n/2 - 1} \binom{n}{r} \cdot (n - r) \cdot (1 - p)^{r(n - 1 - r)}.
        \end{align*}

        Here, the first term represents the number of ways to choose the subgraph $M$, the second term represents the number of ways to choose $v$, and the third term represents the probability that none of the edges between $M$ and $N$ exist. We can bound this from above by \begin{align*}
            \sum_{r = 21}^{n/2 - 1} n^{r+1}(1-p)^{r(n - 1 - r)} & 
            \leq \sum_{r = 21}^{n/2 - 1} \exp((r+1)\log(n) - pr(n - 1 - r))\\
            & \leq \sum_{r = 21}^{n/2 - 1} \exp((r+1)\log(n) - 10r\log(n))\\
            & \leq \sum_{r = 21}^{n/2 - 1} \exp(-8r\log(n))\\
            & = \sum_{r = 21}^{n/2 - 1} n^{-8r}
            = O(n^{-168}).
        \end{align*}
    \end{enumerate}

    Therefore, the probability that $X$ has a cut vertex is at most $o(n^{-8})$. It follows that the probability that $X$ is biconnected is $1 - o(n^{-8})$. 
\end{proof}

Now we are ready to prove the main result of this section.
\begin{proof}[Proof of \cref{theorem:randommain}]
    Let $X$ and $Y$ be random graphs chosen independently from $\mathcal{G}(n,p)$ and $\mathcal{G}(n,q)$, respectively, over the vertex set $[n]$. Without loss of generality we assume $p\ge q$, so in particular, $p\ge 10\sqrt{\log n/n}$. Let $m = (n-1)p$ and $\epsilon=\sqrt{10\log n/m}$. Observe that 
    $\epsilon=o(1)$. 
    
    We first note that, by \cref{lem:random-bicon}, 
    $X$ is connected with probability $1-o(n^{-8})$.
    Moreover, with high probability the degree of every vertex in $X$ is in $[(1- \epsilon)m,(1+\epsilon)m]$.
    In particular, for a fixed vertex $v\in V(X)$, the expected degree of $v$ in $X$ is $m$, and therefore using Chernoff bounds, the probability that $v$'s degree falls outside of the range $[(1- \epsilon)m,(1+\epsilon)m]$ is at most $\exp(-\epsilon^2 m/2) = n^{-5}$. Taking the union bound over the $n$ vertices, we get that with probability at least $1-n^{-4}$, every vertex in $X$ has degree in $[(1- \epsilon)m,(1+\epsilon)m]$ and $X$ is connected.
    We will condition on this event for the rest of the argument.

    Fix the graph $X$, any permutation $\sigma:V(X)\to V(Y)$, and any pair of vertices $a,b \in V(Y)$ such that $\inv a\coloneq\sigma^{-1}(a)$ and $\inv b\coloneq\sigma^{-1}(b)$ are adjacent in $X$. We will consider the event $\mathcal{E}_{\sigma,a,b}$ that there exists a sequence of $O(n^3)$ moves in $\FS(X,Y)$ that implement the transposition $(\inv a, \inv b)$. We will show that over the choice of $Y$ drawn from $\mathcal{G}(n,q)$, the probability that $\mathcal{E}_{\sigma,a,b}$ does not occur is $o(n^{-4})$.

    First, note that this bound implies the result.
    Let $\tau$ and $\omega$ be any two permutations in $\FS(X,Y)$, which is a subgraph of $\FS(X,K_n)$. Since $X$ is connected, so is $\FS(X,K_n)$, and by \cref{theorem:kn}, there is a path $P$ of length at most $\binom{n}{2}$ between $\tau$ and $\omega$ in $\FS(X,K_n)$. Each step in this path corresponds to a move from $\sigma$ to $\sigma\circ(\inv a,\inv b)$ for some $\sigma$, $a$, and $b$ with $\inv a$ and $\inv b$ being neighbors in $X$. If the event $\mathcal{E}_{\sigma,a,b}$ occurs, this step can be replicated in $\FS(X,Y)$ by a sequence of $O(n^3)$ moves. Then, taking the union bound over each step in $P$, we get that the distance between $\tau$ and $\omega$ is at most $O(n^3)\cdot \binom{n}{2} = O(n^5)$ with probability at least $1- \binom{n}{2} \cdot o(n^{-4}) = 1-o(n^{-2})$. The theorem then follows.
    
    It remains to prove the $1 - o(n^{-4})$ lower bound on the probability of occurrence of $\mathcal{E}_{\sigma,a,b}$. Recall that $\inv a=\sigma^{-1}(a)$ and $\inv b=\sigma^{-1}(b)$ are neighbors. Let $T_X$ denote the closed neighborhood of $\inv a$ in $X$. Note that $\abs{T_X}\in[(1-\epsilon)m+1,(1+\epsilon)m+1]$. Consider the set of locations of $T_X$ under $\sigma$ in $Y$, namely, $T_Y = \sigma(T_X)$. We have $a,b\in T_Y$. 
    
    Let $H_X$ and $H_Y$ denote the induced subgraphs $X|_{T_X}$ and $Y|_{T_Y}$ respectively. If $H_Y$ is Wilsonian, then by \cref{lem:wilson}, $\FS(\Star_{\abs{T_X}}, H_Y)$ is connected, and by \cref{theorem:kornhauser} it has diameter $O(\abs{T_X}^3)$, which is also $O(n^3)$. In this case, $\FS(H_X,H_Y)$, which contains $\FS(\Star_{\abs{T_X}}, H_Y)$ as a subgraph where the central vertex of $\Star_{\abs{T_X}}$ is $\inv a$, has diameter $O(n^3)$, and thus contains a path from $\sigma|_{T_Y}$ to $\sigma|_{T_Y}\circ(\inv a, \inv b)$ of length $O(n^3)$. In other words, if $H_Y$ is Wilsonian, then $\mathcal{E}_{\sigma,a,b}$ occurs.

    We will now bound the probability that $H_Y$ is Wilsonian. We do this by bounding the probability of the three possible ways $H_Y$ could not be Wilsonian, where we use $t$ to denote $\abs{T_Y}$.
    \begin{itemize}
        \item {\bf $H_Y$ is not biconnected.} Note that
        \[ q\ge \frac{100\log n}{np}>50\frac{\log n}{t}. \]
        Then, by \cref{lem:random-bicon}, it follows that $H_Y$ is not biconnected with probability at most $o(t^{-8})= o(n^{-8/2})$, where we used $t\ge (1-\epsilon)(n-1)p=\Omega(\sqrt{n\log n})$. Therefore, $H_Y$ is biconnected with probability $1 - o(n^{-4})$.
        \item {\bf $H_Y$ is bipartite.} We can bound the probability of this event by summing over all possible bipartitions the probability that the bipartition arises: 
        \begin{align*}
            \sum_{j=0}^t \binom{t}{j} (1-q)^{\binom{j}{2} + \binom{t-j}{2}} \le 2^t (1-q)^{t^2/4}< 2^te^{-qt^2/4}<\exp(t-t\log n)=o(n^{-4}).
        \end{align*}
        Note that we used the fact that $qt\ge (1-\epsilon)(n-1)pq>4\log n$ and $t>5$.
        \item {\bf $H_Y$ is a cycle.} For any fixed vertex in $H_Y$, its expected degree is $(t-1)q$. We can bound the probability that its degree is at most $2<(t-1)q/4$ using the Chernoff bound by $\exp(-(t-1)q/8)=o(n^{-4})$. The probability that $H_Y$ is a cycle is no more than the probability that one of its vertices has degree at most $2$.
    \end{itemize}
    Taking the union bound over the three bad events and applying \cref{def:wilson}, we get that the probability that $H_Y$ is not Wilsonian is at most $o(n^{-4})$, and so we are done.
\end{proof}

\section{Open Questions}\label{sec:openquestions}

In this section, we state several open questions which we hope to see addressed in future work. 
The first two questions were proposed by Jeong \cite{jeong2022diameters}. This first question is related to Jeong's theorem that showed that $\FS(X, Y)$ can have connected components of diameter $e^{\Omega(n)}$. 

\begin{question}[\cite{jeong2022diameters}]
Let $X$ and $Y$ be graphs on $n$ vertices. Is it true that the maximum possible diameter of a connected component of $\FS(X, Y)$ is $e^{O(n)}$?
\end{question}

Jeong's friends-and-strangers graph construction for the equality case in the above question has many connected components. This raises a natural follow-up question.

\begin{question}[\cite{jeong2022diameters}]
Is it true that if $\FS(X, Y)$ is connected, then its diameter is polynomially bounded in terms of $n$?
\end{question}

\cite{715921} shows that the diameter of the connected components of $\FS(\Star_n, Y)$ is bounded polynomially. We can ask a similar but more general question about trees, as follows.

\begin{question}
Let $T$ be a tree and $Y$ be an arbitrary graph, both with $n$ vertices. Is the diameter of any connected component of $\FS(T, Y)$ polynomially bounded in $n$?
\end{question}

We would also like to extend our results in \cref{sec:bangachev}, as shown in \cref{fig:Banga}.
We showed that the diameter of $\FS(X, Y)$ is polynomial in $n$ when $\min(\delta(X), \delta(Y)) + 2\max(\delta(X), \delta(Y)) \geq 2n$. On the other hand, Bangachev \cite{bangachev2022asymmetric} showed that $\FS(X, Y)$ is connected when $\delta(X), \delta(Y) > \frac{n}{2}$ and $2\min(\delta(X), \delta(Y)) + 3\max(\delta(X), \delta(Y)) \geq 3n$.
This result covers many graphs which are not shown to have polynomial diameter in \cref{theorem:mindegreemain}. We ask whether a counterpart of \cref{theorem:mindegreemain} extends to this regime. 

\begin{question}
Is it true that when $\delta(X), \delta(Y) > \frac{n}{2}$ and $2\min(\delta(X), \delta(Y)) + 3\max(\delta(X), \delta(Y)) \geq 3n$, $\diam(\FS(X, Y))$ is polynomially bounded in $n$?
\end{question}

Bangachev \cite{bangachev2022asymmetric} also conjectured that the condition $\delta(X), \delta(Y) > \frac{n}{2}$ is not necessary for connectivity. Therefore, we ask the following natural question.

\begin{question}
Is it true that when $2\min(\delta(X), \delta(Y)) + 3\max(\delta(X), \delta(Y)) \geq 3n$, $\diam(\FS(X, Y))$ is polynomially bounded in $n$?
\end{question}

In \cref{sec:random}, we showed that when $X$ and $Y$ are Erd\H os-R\' enyi random graphs satisfying certain  conditions, the distance between any two permutations in $\FS(X, Y)$ is $O(n^6)$ with high probability. We are curious to know if in the same regime, one can establish polynomial diameter bounds with high probability. 

\begin{question}
Let $p$ and $q$ be probabilities and $c$ a constant for which $pq \geq \frac{c\log{n}}{n}$. Is it true with high probability that $\FS(\mathcal{G}(n, p), \mathcal{G}(n, q))$ has connected components with $\poly(n)$ diameter?
\end{question}

There are many other questions about friends-and-strangers graphs which are still open; we refer the interested reader to Jeong \cite{jeong2022diameters} for a list with additional open questions.

\section*{Acknowledgments}
Rupert Li was partially supported by a Hertz Fellowship and a PD Soros Fellowship.
We are grateful to the PRIMES-USA program and its organizers, Dr.\ Tanya Khovanova, Prof.\ Pavel Etingof, and Dr.\ Slava Gerovitch, for providing us the opportunity to conduct this research. Furthermore, we thank Dr.\ Khovanova for proofreading our work and providing helpful feedback.

\bibliographystyle{amsinit}
\bibliography{reportref}

@article {DBLP:journals/corr/BonnetMR16,
    AUTHOR = {Bonnet, \'{E}douard and Miltzow, Tillmann and Rz\k{a}\.{z}ewski, Pawe\l},
     TITLE = {Complexity of token swapping and its variants},
   JOURNAL = {Algorithmica},
  FJOURNAL = {Algorithmica. An International Journal in Computer Science},
    VOLUME = {80},
      YEAR = {2018},
    NUMBER = {9},
     PAGES = {2656--2682},
      ISSN = {0178-4617,1432-0541},
   MRCLASS = {68R10 (68Q17)},
  MRNUMBER = {3805577},
MRREVIEWER = {Peter\ Damaschke},
       DOI = {10.1007/s00453-017-0387-0},
       URL = {https://doi.org/10.1007/s00453-017-0387-0},
}

@article {bangachev2022asymmetric,
    AUTHOR = {Bangachev, Kiril},
     TITLE = {On the asymmetric generalizations of two extremal questions on
              friends-and-strangers graphs},
   JOURNAL = {European J. Combin.},
  FJOURNAL = {European Journal of Combinatorics},
    VOLUME = {104},
      YEAR = {2022},
     PAGES = {Paper No. 103529, 26},
      ISSN = {0195-6698,1095-9971},
   MRCLASS = {05C35},
  MRNUMBER = {4400016},
       DOI = {10.1016/j.ejc.2022.103529},
       URL = {https://doi.org/10.1016/j.ejc.2022.103529},
}

@article {milojevic2022connectivity,
    AUTHOR = {Milojevi\'c, Aleksa},
     TITLE = {Connectivity of old and new models of friends-and-strangers
              graphs},
   JOURNAL = {Adv. in Appl. Math.},
  FJOURNAL = {Advances in Applied Mathematics},
    VOLUME = {155},
      YEAR = {2024},
     PAGES = {Paper No. 102668, 53},
      ISSN = {0196-8858,1090-2074},
   MRCLASS = {05C80 (05C40 60C05)},
  MRNUMBER = {4689232},
       DOI = {10.1016/j.aam.2023.102668},
       URL = {https://doi.org/10.1016/j.aam.2023.102668},
}

@article{alon2021extremal,
    AUTHOR = {Alon, Noga and Defant, Colin and Kravitz, Noah},
     TITLE = {Typical and extremal aspects of friends-and-strangers graphs},
   JOURNAL = {J. Combin. Theory Ser. B},
  FJOURNAL = {Journal of Combinatorial Theory. Series B},
    VOLUME = {158},
      YEAR = {2023},
     PAGES = {3--42},
      ISSN = {0095-8956,1096-0902},
   MRCLASS = {05C35 (05C40 05C60 05C80)},
  MRNUMBER = {4513816},
MRREVIEWER = {Nikolaos\ Fountoulakis},
       DOI = {10.1016/j.jctb.2022.03.001},
       URL = {https://doi.org/10.1016/j.jctb.2022.03.001},
}

@article {defant2021friends,
    AUTHOR = {Defant, Colin and Kravitz, Noah},
     TITLE = {Friends and strangers walking on graphs},
   JOURNAL = {Comb. Theory},
  FJOURNAL = {Combinatorial Theory},
    VOLUME = {1},
      YEAR = {2021},
     PAGES = {Paper No. 6, 34},
      ISSN = {2766-1334},
   MRCLASS = {05C40 (05C38 05C60)},
  MRNUMBER = {4396211},
       DOI = {10.5070/C61055363},
       URL = {https://doi.org/10.5070/C61055363},
}

@article {wang2023connectivity,
    AUTHOR = {Wang, Lanchao and Chen, Yaojun},
     TITLE = {Connectivity of friends-and-strangers graphs on random pairs},
   JOURNAL = {Discrete Math.},
  FJOURNAL = {Discrete Mathematics},
    VOLUME = {346},
      YEAR = {2023},
    NUMBER = {3},
     PAGES = {Paper No. 113266, 10},
      ISSN = {0012-365X,1872-681X},
   MRCLASS = {05C40 (05C60 05C80)},
  MRNUMBER = {4513695},
MRREVIEWER = {Tsz\ Lung\ Chan},
       DOI = {10.1016/j.disc.2022.113266},
       URL = {https://doi.org/10.1016/j.disc.2022.113266},
}

@article{jeong2022diameters,
    AUTHOR = {Jeong, Ryan},
     TITLE = {On the diameters of friends-and-strangers graphs},
   JOURNAL = {Comb. Theory},
  FJOURNAL = {Combinatorial Theory},
    VOLUME = {4},
    NUMBER = {2},
      YEAR = {2024},
     PAGES = {Paper No. 2},
       DOI = {10.5070/C64264229},
       URL = {https://doi.org/10.5070/C64264229},
}

@article {wilson1974graph,
    AUTHOR = {Wilson, Richard M.},
     TITLE = {Graph puzzles, homotopy, and the alternating group},
   JOURNAL = {J. Combinatorial Theory Ser. B},
  FJOURNAL = {Journal of Combinatorial Theory. Series B},
    VOLUME = {16},
      YEAR = {1974},
     PAGES = {86--96},
      ISSN = {0095-8956},
   MRCLASS = {05C25},
  MRNUMBER = {332555},
MRREVIEWER = {Derek\ Smith},
       DOI = {10.1016/0095-8956(74)90098-7},
       URL = {https://doi.org/10.1016/0095-8956(74)90098-7},
}

@article{Stanley2012equivalence,
    title={An equivalence relation on the symmetric group and multiplicity-free flag h-vectors},
    author={Stanley, Richard P.},
    journal={The Journal of Combinatorics},
    volume={3},
    number={3},
    pages={277--298},
    year={2012},
    publisher={International Press of Boston},
    doi={10.4310/JOC.2012.v3.n3.a2},
    eprint={1208.3540} 
}

@ARTICLE{defant2022connectedness,
       author = {{Defant}, Colin and {Dong}, David and {Lee}, Alan and {Wei}, Michelle},
        title = "{Connectedness and Cycle Spaces of Friends-and-Strangers Graphs}",
      journal = {arXiv e-prints},
     keywords = {Mathematics - Combinatorics, 05C40, 05C70},
         year = 2022,
        month = sep,
          eid = {arXiv:2209.01704},
        pages = {arXiv:2209.01704},
          doi = {10.48550/arXiv.2209.01704},
archivePrefix = {arXiv},
       eprint = {2209.01704},
 primaryClass = {math.CO},
       adsurl = {https://ui.adsabs.harvard.edu/abs/2022arXiv220901704D},
      adsnote = {Provided by the SAO/NASA Astrophysics Data System}
}

@misc{krishnan2024connectivity,
      title={On the Connectivity of Friends-and-strangers Graphs}, 
      author={Neil Krishnan and Rupert Li},
      year={2024},
      eprint={2410.21334},
      archivePrefix={arXiv},
      primaryClass={math.CO},
      url={https://arxiv.org/abs/2410.21334}, 
}

@article{Yamanaka2015SwappingLT,
  title={Swapping Labeled Tokens on Graphs},
  author={Katsuhisa Yamanaka and Erik D. Demaine and Takehiro Ito and Jun Kawahara and Masashi Kiyomi and Yoshio Okamoto and Toshiki Saitoh and Akira Suzuki and Kei Uchizawa and Takeaki Uno},
  journal={Theoretical Computer Science},
  volume={586},
  pages={81-94},
  year={2015}
}

@InProceedings{miltzow2016approximation,
  author =	{Miltzow, Tillmann and Narins, Lothar and Okamoto, Yoshio and Rote, G\"{u}nter and Thomas, Antonis and Uno, Takeaki},
  title =	{{Approximation and Hardness of Token Swapping}},
  booktitle =	{24th Annual European Symposium on Algorithms (ESA 2016)},
  pages =	{66:1--66:15},
  series =	{Leibniz International Proceedings in Informatics (LIPIcs)},
  ISBN =	{978-3-95977-015-6},
  ISSN =	{1868-8969},
  year =	{2016},
  volume =	{57},
  editor =	{Sankowski, Piotr and Zaroliagis, Christos},
  publisher =	{Schloss Dagstuhl -- Leibniz-Zentrum f{\"u}r Informatik},
  address =	{Dagstuhl, Germany},
  URL =		{https://drops.dagstuhl.de/entities/document/10.4230/LIPIcs.ESA.2016.66},
  URN =		{urn:nbn:de:0030-drops-64084},
  doi =		{10.4230/LIPIcs.ESA.2016.66},
  annote =	{Keywords: token swapping, minimum generator sequence, graph theory, NP-hardness, approximation algorithms}
}

@INPROCEEDINGS{715921,
  author={Kornhauser, D. and Miller, G. and Spirakis, P.},
  booktitle={25th Annual Symposium on Foundations of Computer Science, 1984.}, 
  title={Coordinating Pebble Motion On Graphs, The Diameter Of Permutation Groups, And Applications}, 
  year={1984},
  volume={},
  number={},
  pages={241-250},
  keywords={Upper bound;Light emitting diodes;Testing;Algorithm design and analysis},
  doi={10.1109/SFCS.1984.715921}}

\end{document}